\numberwithin{equation}{section}
\tikzstyle{vertex}=[draw=black,circle,fill=black,minimum size=4pt, inner sep=0pt, outer sep=0pt,text=white,line width=0mm]
\tikzstyle{c0}=[shape=circle, minimum size=4pt, fill=white]
\tikzstyle{c1}=[shape=rectangle, minimum size=7pt, fill=red]
\tikzstyle{c2}=[shape=diamond, minimum size=10pt, fill=blue]
\theoremstyle{plain}
\newtheorem{Th}{Theorem}[section]
\newtheorem{Lemma}[Th]{Lemma}
\newtheorem{Prop}[Th]{Proposition}
 \theoremstyle{definition}
\newtheorem{Rem}[Th]{Remark}
\newtheorem{?}[Th]{Problem}
\newcommand{\C}{\mathbb{C}}
\begin{document}

\title{Note on the zero-free region of the hard-core model}

\author[F. Bencs]{Ferenc Bencs}

\address{
Central European University, Department of Mathematics, H-1051 Budapest,
Zrinyi u. 14., Hungary \& Alfr\'ed R\'enyi Institute of Mathematics, H-1053 Budapest,
Re\'altanoda u. 13-15}

\email{ferenc.bencs@gmail.com}

\author[P. Csikv\'ari]{P\'{e}ter Csikv\'{a}ri}

\address{MTA-ELTE Geometric and Algebraic Combinatorics Research Group \\ P\'{a}zm\'{a}ny P\'{e}ter s\'{e}t\'{a}ny 1/C \\ Hungary \& E\"{o}tv\"{o}s Lor\'{a}nd University \\ Mathematics Institute, Department of Computer 
Science \\ H-1117 Budapest
\\ P\'{a}zm\'{a}ny P\'{e}ter s\'{e}t\'{a}ny 1/C \\ Hungary} 

\email{peter.csikvari@gmail.com}

\thanks{The first author is partially supported by the MTA R\'enyi Institute Lend\"ulet Limits of Structures Research Group. The second author  is supported by the 
 Marie Sk\l{}odowska-Curie Individual Fellowship grant no. 747430, and before that grant he was partially supported by the
Hungarian National Research, Development and Innovation Office, NKFIH grant K109684 and Slovenian-Hungarian grant NN114614, and by the ERC Consolidator Grant  648017.}

 \subjclass[2010]{Primary: 05C35. Secondary: 05C31, 05C70, 05C80}

 \keywords{independence polynomial} 

\begin{abstract} In this paper we prove a new zero-free region for the partition function of the hard-core  model, that is, the independence polynomials of graphs with largest degree $\Delta$. This new domain contains the half disk 
$$D=\left\{ \lambda \in \mathbb{C}\ |\ \mathrm{Re}(\lambda)\geq 0, |\lambda|\leq \frac{7}{8}\tan \left( \frac{\pi}{2(\Delta-1)}\right)\right\}.$$
\end{abstract}

\maketitle

\section{Introduction}

The independence polynomial of a graph $G$ is defined as 
$$Z_G(\lambda)=\sum_{k=0}i_k(G)\lambda^k,$$
where $i_k(G)$ be the number of independent sets of the graph $G$. More generally one can define a multivariate version of the independence polynomial as follows. For each vertex $v$ let us introduce a new variable $\lambda_v$, and set
$$Z_G(\{\lambda_v\})=\sum_{I\in \mathcal{I}(G)}\prod_{v\in I}\lambda_v,$$
where $\mathcal{I}(G)$ is the set of independent sets of $G$ including the empty one.
The polynomial $Z_G(\{\lambda_v\})$ is the partition function of the hard-core model in statistical physics. It is an important problem to study its zero-free region. 
It is related to many things including Lov\'asz local lemma, the computational complexity of the computation of independence polynomial. One of the best-known result is due to Shearer showing that $Z_G(\lambda)$ does not vanish in a certain disk.

\begin{Th}[J. Shearer \cite{shearer1985problem}] \label{Shearer} Suppose that a graph $G$ has largest degree at most $\Delta$, and $|\lambda_v|\leq \frac{(\Delta-1)^{\Delta-1}}{\Delta^{\Delta}}$ for each vertex $v\in V(G)$. Then $Z_G(\{\lambda_v\})\neq 0$. In particular, $Z_G(\lambda)\neq 0$ if $|\lambda|\leq \frac{(\Delta-1)^{\Delta-1}}{\Delta^{\Delta}}$.
\end{Th}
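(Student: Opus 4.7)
The plan is to prove the theorem by induction on $|V(G)|$, exploiting the fundamental vertex-deletion identity
\[
Z_G(\{\lambda_w\}) = Z_{G-v}(\{\lambda_w\}) + \lambda_v\, Z_{G-N[v]}(\{\lambda_w\}),
\]
obtained by splitting the independent sets according to whether they contain a chosen vertex $v$. Rewriting this as
\[
\frac{Z_G}{Z_{G-v}} = 1 + \lambda_v \frac{Z_{G-N[v]}}{Z_{G-v}},
\]
non-vanishing of $Z_G$ is equivalent to the right-hand side being nonzero. This suggests strengthening the inductive hypothesis to control not merely non-vanishing, but the location of the ratio $R_v := \lambda_v Z_{G-N[v]}/Z_{G-v}$ in a suitable subregion of $\mathbb{C}$ (together with non-vanishing of all the relevant denominators).

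To propagate such a bound, I would iterate the identity down through the neighbours $v_1, \ldots, v_d$ ($d \le \Delta$) of $v$. Setting $H_i := G - \{v, v_1, \ldots, v_{i-1}\}$, a telescoping computation gives
\[
\frac{Z_{G-N[v]}}{Z_{G-v}} = \prod_{i=1}^d \frac{Z_{H_i - v_i}}{Z_{H_i}} = \prod_{i=1}^d \frac{1}{1 + R_{v_i}^{(i)}},
\]
where $R_{v_i}^{(i)}$ is the analogue of $R$ for the vertex $v_i$ in the smaller graph $H_i$. Since each $H_i$ has strictly fewer vertices and still satisfies the hypothesis, the inductive hypothesis forces each $R_{v_i}^{(i)}$ into the target region, and the inductive step reduces to checking that the product of the $1/(1+R_{v_i}^{(i)})$ factors, multiplied by $\lambda_v$, lies in the same region. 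The natural choice of region is dictated by the real extremal problem: the function $x \mapsto x/(1+x)^\Delta$ attains its maximum value $p^* := (\Delta-1)^{\Delta-1}/\Delta^\Delta$ at $x = 1/(\Delta-1)$, which matches the fixed-point analysis of the tree recursion on the infinite $\Delta$-regular tree and pinpoints the constant appearing in the theorem.

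The main obstacle is the closing analytic step: one must verify an inequality of the shape
\[
\left| \lambda_v \prod_{i=1}^d \frac{1}{1 + R_i} \right| \le \frac{1}{\Delta-1}
\qquad \text{whenever } |\lambda_v| \le p^* \text{ and each } R_i \text{ lies in the target region.}
\]
A crude bound via $|1+R_i| \ge 1 - |R_i|$ is \emph{not} sharp enough to close the induction, because in the complex regime the phases of the $R_i$ may align so as to drive $|1+R_i|$ down close to $(\Delta-2)/(\Delta-1)$, after which a product of $\Delta$ such factors blows well past $1/(\Delta-1)$. The remedy is to choose an invariant region thoughtfully, so that the M\"{o}bius-type map induced by the recursion preserves it and so that the extremal configuration is attained at real positive parameters, where the constant $p^*$ is tight. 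Identifying this region and verifying its invariance under arbitrary $d\le \Delta$ and complex $\lambda_v$ with $|\lambda_v|\le p^*$ is the technical core of the argument, and is what forces the precise constant $(\Delta-1)^{\Delta-1}/\Delta^\Delta$.
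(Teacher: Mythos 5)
The paper does not prove Shearer's theorem---it is stated as a citation---so there is no "paper's own proof" to compare against directly. But the paper's Lemma~\ref{general} (invariant region $F_\lambda$ under $f_\lambda(z_1,\dots,z_d)=\lambda/\prod(1+z_i)$) together with Lemma~\ref{tree} (reduction to trees, so that each vertex has at most $d=\Delta-1$ children) is exactly the machinery needed, and your proposal sets up precisely this framework: the vertex-deletion identity, the ratio $R_v$, and the telescoping product over neighbours. So the skeleton is right and compatible with the paper's toolbox.

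The gap is the closing step, which you explicitly leave open, and your diagnosis of why it is hard is incorrect. You claim that a crude bound $|1+R_i|\ge 1-|R_i|$ cannot close the induction because phases may align to push $|1+R_i|$ down to $(\Delta-2)/(\Delta-1)$ and "a product of $\Delta$ such factors" blows up. Two things are off. First, you chose the radius $1/(\Delta-1)$ because it maximizes $x/(1+x)^\Delta$; but the quantity dictated by the recursion is $r(1-r)^{\Delta-1}$, whose maximizer is $r=1/\Delta$, and it is the disk of radius $1/\Delta$ that is invariant. (Both extremal problems happen to produce the same value $(\Delta-1)^{\Delta-1}/\Delta^\Delta$, which is why the constant looked familiar, but they single out different radii.) Second, the product should have at most $\Delta-1$ factors, not $\Delta$: either apply Lemma~\ref{tree} and root the tree at a leaf, or run the standard two-round argument (also noted in the paper's Remark) so that the telescoping always recurses to a vertex of degree $\le\Delta-1$ in the residual graph. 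With $F_\lambda=\{z:|z|\le 1/\Delta\}$, $d\le\Delta-1$ factors, and $|\lambda|\le(\Delta-1)^{\Delta-1}/\Delta^\Delta$, the naive bound gives
\[
\left|\frac{\lambda}{\prod_{i=1}^d(1+z_i)}\right|
\le\frac{(\Delta-1)^{\Delta-1}/\Delta^\Delta}{\bigl(1-\tfrac1\Delta\bigr)^{\Delta-1}}
=\frac{1}{\Delta},
\]
so $F_\lambda$ is invariant, $-1\notin\overline{F_\lambda}$, and the induction closes with the sharp constant. The worry about phase alignment is a red herring: for a disk centred at the origin the worst case for $|1+z|$ is always the real point $z=-r$, so $|1+z|\ge 1-|z|$ is tight and there is no loss to recover by tracking phases. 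In short, the "technical core" you defer is in fact the easy step once the right radius and the right number of factors are in place; what genuinely requires an idea is reducing from general graphs to the $\Delta-1$-factor recursion, which is where Lemma~\ref{tree} or the two-round argument enters, and your proposal glosses over that.
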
 

\noindent Very recently H. Peters and G. Regts  found other zero-free regions.

\begin{Th}[H. Peters and G. Regts \cite{peters2017conjecture}] \label{PR1} There exists an open domain $D$ on the complex plane that contains the interval $(0, \frac{(\Delta-1)^{\Delta-1}}{(\Delta-2)^{\Delta}})$ such that $Z_G(\lambda)\neq 0$ if $\lambda\in D$ and $G$ has largest degree at most $\Delta$.
\end{Th}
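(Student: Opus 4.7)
The plan is to combine Weitz's self-avoiding walk tree reduction with a complex-dynamical analysis of the tree recursion for the independence polynomial.

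Weitz's construction attaches to every graph $G$ of maximum degree at most $\Delta$ and every vertex $v$ a rooted tree $T_{\mathrm{SAW}}(G,v)$ of maximum degree at most $\Delta$, with prescribed boundary values at its leaves, such that the root ratio of $T_{\mathrm{SAW}}(G,v)$ agrees with the ratio $Z_{G,\,v\text{ occupied}}(\lambda)/Z_{G,\,v\text{ empty}}(\lambda)$. By a standard induction on $|V(G)|$ this reduces the theorem to controlling, for every rooted tree $T$ of maximum degree at most $\Delta$, the quantity
$$\rho_T(\lambda)\;:=\;\frac{Z_{T,\,r\text{ occupied}}(\lambda)}{Z_{T,\,r\text{ empty}}(\lambda)}.$$
If the root $r$ has children carrying pendant subtrees $T_1,\dots,T_d$ with $d\le\Delta$, then $\rho_T$ obeys the tree recursion
$$\rho_T(\lambda)\;=\;\lambda\prod_{i=1}^{d}\frac{1}{1+\rho_{T_i}(\lambda)}.$$

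The analytic core of the argument is to exhibit, for each $\lambda_0\in(0,\lambda_c)$ with $\lambda_c=(\Delta-1)^{\Delta-1}/(\Delta-2)^{\Delta}$, an open neighborhood $U(\lambda_0)\subset\mathbb{C}$ of $\lambda_0$ together with a compact region $K(\lambda_0)\subset\mathbb{C}\setminus\{-1\}$ containing the attracting real fixed point of $\rho\mapsto\lambda_0/(1+\rho)^{\Delta-1}$, with the property that for every $\lambda\in U(\lambda_0)$ and every $1\le d\le\Delta-1$ the multivariate map $(z_1,\dots,z_d)\mapsto\lambda\prod(1+z_i)^{-1}$ sends $K(\lambda_0)^d$ into $K(\lambda_0)$. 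Declaring $D:=\bigcup_{\lambda_0\in(0,\lambda_c)}U(\lambda_0)$ then produces an open set containing the interval $(0,\lambda_c)$; forward-invariance of $K(\lambda_0)$ propagates through the recursion so that $\rho_{T_i}(\lambda)\in K(\lambda_0)\not\ni -1$ for every subtree, and a separate verification at the root (where the branching may reach $\Delta$) gives $1+\rho_T(\lambda)\ne 0$. Since one has inductively $Z_T(\lambda)=\prod_i Z_{T_i}(\lambda)\cdot(1+\rho_T(\lambda))$, this yields $Z_T(\lambda)\ne 0$, and combined with Weitz's reduction the theorem follows.

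The main difficulty lies in constructing the trapping region $K(\lambda_0)$. On the real line the picture is classical: for $\lambda\in(0,\lambda_c)$ the map $\phi_\lambda(\rho)=\lambda/(1+\rho)^{\Delta-1}$ has a single attracting real fixed point $\rho^*(\lambda)$ with multiplier $-(\Delta-1)\rho^*/(1+\rho^*)$ of modulus strictly less than $1$. By the implicit function theorem $\rho^*$ extends holomorphically to a complex neighbourhood of $\lambda_0$, and a sufficiently small disk around $\rho^*(\lambda_0)$ is invariant under $\phi_\lambda$. The subtlety is that $K(\lambda_0)$ has to be invariant simultaneously for every branching $d\in\{1,\dots,\Delta-1\}$ of the multivariate recursion, not only for the regular case $d=\Delta-1$: different values of $d$ impose different geometric demands on the $d$-fold product $K(\lambda_0)^d$, and those constraints are not compatible for an arbitrary choice of $K(\lambda_0)$. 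I would pursue the construction by choosing $K(\lambda_0)$ to be a closed disk or lens-shaped region whose center and radius are tuned to $\rho^*(\lambda_0)$, verifying invariance through a Schwarz-lemma style derivative estimate near the fixed point combined with a direct boundary check, and balancing the constraints across all admissible branching numbers. Reconciling these constraints uniformly is the central obstacle, and is exactly where the hypothesis $\lambda<\lambda_c$ gets consumed.
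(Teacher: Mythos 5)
The statement you are proving is Theorem~\ref{PR1}, which the paper under review merely \emph{cites} from Peters and Regts \cite{peters2017conjecture} and does not prove, so there is no in-paper argument to compare against directly; I assess your proposal against the known strategy, whose overall architecture (tree reduction plus a forward-invariant region for the ratio recursion) you do identify correctly.

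There are two genuine problems. First, you misplace the difficulty. If you arrange $0\in K(\lambda_0)$, then forward-invariance under the full $(\Delta-1)$-variable map already gives invariance for every smaller branching $d$: pad the missing arguments with zeros, exactly as the paper's Lemma~\ref{general} does. The ``tension between different branching numbers'' you worry about simply evaporates once $0$ is in the region, so it is not the central obstacle. Second, and fatally for your specific plan, a disk or lens ``whose center and radius are tuned to $\rho^*(\lambda_0)$'' cannot do double duty: it cannot be small enough to inherit the local contraction at $\rho^*$ and at the same time contain the values the recursion actually starts from. As $\lambda_0\to\lambda_c=(\Delta-1)^{\Delta-1}/(\Delta-2)^{\Delta}$ the fixed point tends to $1/(\Delta-2)$ and the multiplier $-(\Delta-1)\rho^*/(1+\rho^*)$ tends to $-1$, so a Schwarz-type invariant disk around $\rho^*$ must shrink to a point; but the leaf ratio equals $\lambda$ itself (and the padding value is $0$), neither of which lies in such a disk. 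Concretely for $\Delta=3$ and $\lambda$ near $\lambda_c=4$ one has $\rho^*\approx 1$ while the leaf value is $\approx 4$, so the iteration never even enters your proposed $K$. This is precisely why the successful arguments abandon fixed-point-centered disks in favor of sectors anchored at the origin: Peters--Regts, and the present paper's Lemma~\ref{angle1}, take $F_\lambda=\{z : |z|\le|\lambda|,\ -\beta\le\arg z\le\gamma\}$, which contains $0$, contains $\lambda$, stays away from $-1$, and is invariant because $z\mapsto 1+z$ maps the sector into a thin wedge about the positive real axis whose argument bounds control $\arg f$. Your write-up also explicitly defers the key construction (``the central obstacle''), so as it stands it is a plan with a gap rather than a proof; the fix is to switch to the sector geometry.
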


\noindent They also gave a more explicit zero-free region.

\begin{Th}[H. Peters and G. Regts \cite{peters2017conjecture}] \label{PR2}
Let $G$ be a graph with largest degree $\Delta\geq 2$. For a fixed $\varepsilon$ et
$$D_{\varepsilon}=\left\{ \lambda\in \mathbb{C}\ |\ |\lambda|\leq \tan\left(\frac{\pi}{(2+\varepsilon)(\Delta-1)}\right)\ \ \mbox{and}\ \ |\mathrm{arg}(\lambda)|\leq \frac{\varepsilon \pi}{2(2+\varepsilon)}\right\}.$$
Set $D_{PR}=\bigcup_{\varepsilon}D_{\varepsilon}$. Suppose that for each vertex $v\in V(G)$ we have $\lambda_v\in D_{\varepsilon}$ for a fixed $\varepsilon$, then
$Z_G(\{\lambda_v\})\neq 0$. In particular, if $\lambda \in D_{PR}$ then $Z_G(\lambda)\neq 0$.
\end{Th}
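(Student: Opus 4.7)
The plan is to follow the standard tree-recursion strategy. First, I would invoke Weitz's self-avoiding walk tree reduction: it suffices to prove $Z_T(\{\lambda_v\}) \neq 0$ for every rooted tree $T$ of maximum degree at most $\Delta$ whose vertex weights lie in $D_\varepsilon$. On such a tree, at each vertex $v$ with subtree $T_v$, introduce the occupation ratio
$$R_v = \lambda_v \frac{Z_{T_v \setminus N[v]}}{Z_{T_v - v}},$$
which, for $v$ with children $v_1, \ldots, v_{d_v}$, satisfies the recursion $R_v = \lambda_v \prod_{i=1}^{d_v} 1/(1+R_{v_i})$ and the factorization $Z_T = Z_{T-r}(1+R_r)$ at the root $r$. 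It therefore suffices to find a region $U \subset \mathbb{C} \setminus \{-1\}$ containing $D_\varepsilon$ and closed under this recursion whenever the number of children is at most $\Delta - 1$.

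Writing $\theta = \frac{\pi}{(2+\varepsilon)(\Delta - 1)}$ and $\alpha = \frac{\varepsilon\pi}{2(2+\varepsilon)}$, the essential arithmetic identity built into $D_\varepsilon$ is $\alpha + (\Delta-1)\theta = \pi/2$, which dictates the choice
$$U = \{R \in \mathbb{C} : \mathrm{Re}(R) \geq 0 \text{ and } |R| \leq \tan\theta\}.$$
For $R_{v_i} \in U$ one has $\mathrm{Re}(1+R_{v_i}) \geq 1$ and $|\mathrm{Im}(R_{v_i})| \leq \tan\theta$, hence $|\arg(1+R_{v_i})| \leq \theta$ together with $|1+R_{v_i}| \geq 1$. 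Substituting into the recursion at a non-root vertex $v$ (where $d_v \leq \Delta - 1$) gives
$$|\arg R_v| \leq \alpha + (\Delta-1)\theta = \frac{\pi}{2}, \qquad |R_v| \leq \frac{|\lambda_v|}{1} \leq \tan\theta,$$
so $R_v \in U$. This closes the induction from the leaves, where $R_v = \lambda_v \in U$ follows directly from the definition of $D_\varepsilon$.

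At the root $r$ (where $d_r$ may be as large as $\Delta$) the same bounds yield $|\arg R_r| \leq \alpha + \Delta\theta = \pi/2 + \theta < \pi$ and $|R_r| \leq \tan\theta$, so $R_r$ does not lie on the negative real axis and in particular $R_r \neq -1$. Combined with $Z_{T-r} \neq 0$ (by induction on $|V(T)|$, using that $Z_{T-r}$ is a product of partition functions of smaller trees), this gives $Z_T = Z_{T-r}(1 + R_r) \neq 0$. The main obstacle, as is typical for this method, is locating the invariant region: the identity $\alpha + (\Delta-1)\theta = \pi/2$ is precisely what allows the condition $\mathrm{Re}(R_v) \geq 0$ to propagate through one step of the recursion with at most $\Delta - 1$ children, and this explains the exact form of $D_\varepsilon$.
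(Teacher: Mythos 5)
Your proof is correct. It is worth noting that the paper itself does not prove Theorem~\ref{PR2}: it cites the result from Peters and Regts and only remarks, at the end of the proof of Theorem~\ref{circular}, that their argument is ``a variation of the special case of Lemma~\ref{angle1} with $\beta=0,\gamma=\pi/2$.'' Your half-disk $U=\{R:\operatorname{Re}(R)\geq 0,\ |R|\leq\tan\theta\}$ is exactly the symmetric instance of the paper's sector $F_\lambda=\{z:|z|\leq|\lambda|,\ -\beta\leq\arg z\leq\gamma\}$ with $\beta=\gamma=\pi/2$ and $|\lambda|\leq\tan\theta$, and the computation $|\arg(1+R_{v_i})|\leq\theta$, $|1+R_{v_i}|\geq 1$, $\alpha+(\Delta-1)\theta=\pi/2$ is the same propagation step that drives Lemma~\ref{angle1}. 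The one genuine divergence in route: you invoke Weitz's self-avoiding-walk tree and then run the ``two-round'' argument --- treating the root, which may have up to $\Delta$ children, separately from the non-root vertices with at most $\Delta-1$ children --- whereas the paper deliberately replaces this with Bencs's tree lemma (Lemma~\ref{tree}) so that one can root the tree at a leaf and handle every vertex uniformly; the paper even flags this as the main simplification in a Remark. Both reductions are valid; yours is closer to the original Peters--Regts argument, the paper's is a bit cleaner since the root case disappears. Your bound $|\arg R_r|\leq\pi/2+\theta<\pi$ at the root does hold because $\theta=\frac{\pi}{(2+\varepsilon)(\Delta-1)}<\pi/2$ for $\varepsilon>0$ and $\Delta\geq 2$, so the conclusion $R_r\neq -1$ is sound.
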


\noindent At the same time H. Peters and G. Regts \cite{peters2017conjecture} also identified a domain $U_{d}$ for which the zeros of independence polynomials of graphs of maximum degree $\Delta=d+1$ is dense on the complement of $U_{d}$. This domain can be described as follows. Let $d=\Delta-1$, and let
$$U_d=\left\{ \frac{-\alpha d^d}{(d+\alpha)^{d+1}}\ \Bigg| |\alpha|\leq 1\right\}.$$
In fact, H. Peters and G. Regts showed the following stronger result.  Consider the  complete $d$-ary trees $T_{k,d}$, where all leaves have distance $k$ from the root. Then the zeros of independence polynomials of the trees $T_{k,d}$ are dense outside $U_d$. 

\noindent Below we describe our contribution. Instead of giving immediately the explicit description of the our zero-free domain $D$ we only give some of its most important properties.

\begin{Th} \label{circular}
(a) Let $D=\{\lambda\ |\ \mathrm{Re}\lambda\geq 0, |\lambda|\leq 1\}$. If $\lambda\in D$ then $Z_G(\lambda)\neq 0$ for any graph $G$ with maximum degree $3$.
\medskip

(b) Let $\Delta \geq 3$. Let 
$$D_1=\left\{\lambda\ |\ \mathrm{Re}\lambda\geq 0, |\lambda|\leq \frac{7}{8}\tan \left(\frac{\pi}{2(\Delta-1)}\right) \right\}$$
and  
$$D_2=\left\{\lambda\ |\ \mathrm{Re}\lambda\geq 0, |\lambda|\leq \tan \left(\frac{\pi}{2(\Delta-1)}\right) \right\}.$$
There exists an open domain $D\subseteq \C$ with the following properties:
\begin{enumerate}
\item if $\lambda\in D$ then $Z_G(\lambda)\neq 0$ for any graph $G$ with maximum degree $\Delta$,
\item $D_1\subseteq D\subseteq D_2$,
\item $D_{PR}\subset D$,
\item $\tan \left(\frac{\pi}{2(\Delta-1)}\right),\pm i\tan \left(\frac{\pi}{2(\Delta-1)}\right)\in D$.
\end{enumerate}
\end{Th}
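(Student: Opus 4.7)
The plan is to follow the now-standard framework developed by Weitz, Scott--Sokal, and Peters--Regts: reduce the non-vanishing of $Z_G(\lambda)$ for graphs of maximum degree $\Delta$ to the non-vanishing of a ratio on an associated rooted tree, and then exhibit an invariant region for the tree recursion. Concretely, for a rooted tree $T$ with root $v$ and children subtrees $T_{u_1},\ldots,T_{u_d}$, one works with the ratio $R_v=Z^{\mathrm{in}}_{T_v}/Z^{\mathrm{out}}_{T_v}$, where the superscripts record whether $v$ is in the independent set. Standard deletion--contraction gives $Z_{T_v}=Z^{\mathrm{out}}_{T_v}\cdot(1+R_v)$ together with the recursion
\[ R_v=\lambda_v\prod_{i=1}^{d}\frac{1}{1+R_{u_i}},\qquad R_v=\lambda_v \text{ at leaves}. \]
By induction along $T$, maintaining $R_v\neq -1$ at every vertex yields $Z_{T_v}\neq 0$. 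Weitz's self-avoiding-walk tree construction, combined with induction on $|V(G)|$, then transfers this non-vanishing to arbitrary graphs of maximum degree $\Delta$; in fact one proves the stronger multivariate statement allowing distinct $\lambda_v$'s, which is what the tree recursion naturally yields.

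The heart of the argument is to produce, for every $\lambda$ in the proposed domain $D$, an open set $S=S_\lambda\subset \C\setminus\{-1\}$ such that $\lambda\in S$ and $\lambda\prod_{i=1}^{d}(1+R_i)^{-1}\in S$ for all $R_1,\ldots,R_d\in S$ with $d\leq \Delta-1$. The domain $D$ is then defined as the set of those $\lambda$ for which such an $S_\lambda$ exists. The half-disk shape of $D_1$ and the containment $D\subseteq D_2$ are dictated by a geometric constraint originating with Peters--Regts: if one insists that every $1+R$ with $R\in S$ lie in a sector of half-angle $\pi/(2(\Delta-1))$ around the positive real axis, then $\prod_{i=1}^{\Delta-1}(1+R_i)$ sits in the right half-plane, and dividing $\lambda$ by it keeps the image in a controlled region provided $\mathrm{Re}(\lambda)\geq 0$ and $|\lambda|$ is not too large. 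The slack factor $7/8$ in $D_1$ is what one needs in the modulus to absorb perturbations of $\arg(\lambda)$ throughout $[-\pi/2,\pi/2]$; on the positive real axis and on the imaginary axis the geometry is tighter and one can push to the full radius $\tan(\pi/(2(\Delta-1)))$, yielding property~(4).

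The main obstacle is the geometric design of $S_\lambda$ for $\lambda$ far from the positive real axis. For real positive $\lambda$ a sector-shaped region as in Peters--Regts suffices, but for general $\lambda\in D$ one must deform this region---likely rotating or enlarging it as a function of $\arg(\lambda)$---while keeping it stable under the $(\Delta-1)$-fold product map and disjoint from $-1$. Once such a family $\{S_\lambda\}$ is in hand, openness of $D$ is essentially automatic, the inclusion $D_{PR}\subset D$ reduces to verifying that the Peters--Regts cones are admissible choices of $S_\lambda$, and the sharper part~(a) for $\Delta=3$ exploits that $\tan(\pi/4)=1$ and that the product map involves only two factors; this loosens the constraints enough to let a single invariant region work uniformly over the entire closed right half-disk of radius~$1$.
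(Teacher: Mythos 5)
Your outline correctly identifies the framework (reduce to a tree recursion, find a $\lambda$-dependent invariant region $S_\lambda$ for the map $(z_1,\ldots,z_d)\mapsto \lambda\prod(1+z_i)^{-1}$ avoiding $-1$), and it correctly anticipates that the region must vary with $\arg(\lambda)$ and that the radius $\tan(\pi/(2(\Delta-1)))$ is attainable on the real and imaginary axes. But you then say explicitly that the ``main obstacle is the geometric design of $S_\lambda$ for $\lambda$ far from the positive real axis'' and offer only ``likely rotating or enlarging it as a function of $\arg(\lambda)$''---which is precisely the content of the theorem, left unproved. Without an explicit construction and a verification that it is invariant under the recursion, there is no proof of properties (1), (2), or (4), and the $7/8$ in $D_1$ is asserted to be ``what one needs to absorb perturbations'' with no computation behind it.

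The paper closes this gap with a concrete, asymmetric truncated sector $F_\lambda=\{z : |z|\leq|\lambda|,\ -\beta\leq\arg z\leq\gamma\}$. Writing $\alpha=\arg\lambda$ and $\beta'=\arctan\frac{|\lambda|\sin\beta}{1+|\lambda|\cos\beta}$ (similarly $\gamma'$), one checks that $F_\lambda$ is invariant exactly when $d\gamma'-\beta\leq\alpha\leq\gamma-d\beta'$, because $|1+z|\geq 1$ on $F_\lambda$ controls the modulus and $\arg(1+z)\in[-\beta',\gamma']$ controls the argument. Fixing $\gamma=\pi/2$ and solving the saturated equation $d(\beta'+\gamma')=\beta+\gamma$ via the $\arctan$ addition formula reduces to a quadratic $A|\lambda|^2+B|\lambda|+C=0$ with explicit trigonometric coefficients; its positive root $s(\beta,\pi/2)$, together with the induced argument $t(\beta,\pi/2)=\gamma-d\beta'$, parametrizes the boundary of $D$ as $\beta$ runs over $[0,\pi/2]$. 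The bound $s(\beta,\pi/2)>\frac{2\beta+\pi}{\pi(1+\sin\beta)}\tan(\pi/(2d))>\frac{7}{8}\tan(\pi/(2d))$ is then derived from estimates on the quadratic root ($-C/B-AC^2/B^3\leq x_0\leq -C/B$) and monotonicity of $\tan(x)/x$, and $s(\beta,\pi/2)\equiv 1$ when $d=2$ gives part~(a). None of this machinery appears in your proposal, so the argument as written is incomplete. (One further simplification you could have used: instead of Weitz's SAW tree plus a two-round induction, the paper invokes Bencs's lemma that $Z_G$ divides $Z_{T_G}$ for some tree $T_G$ of the same maximum degree, so it suffices to handle trees directly.)
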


\begin{figure}[h]
   \centering
   \includegraphics[width=8cm]{./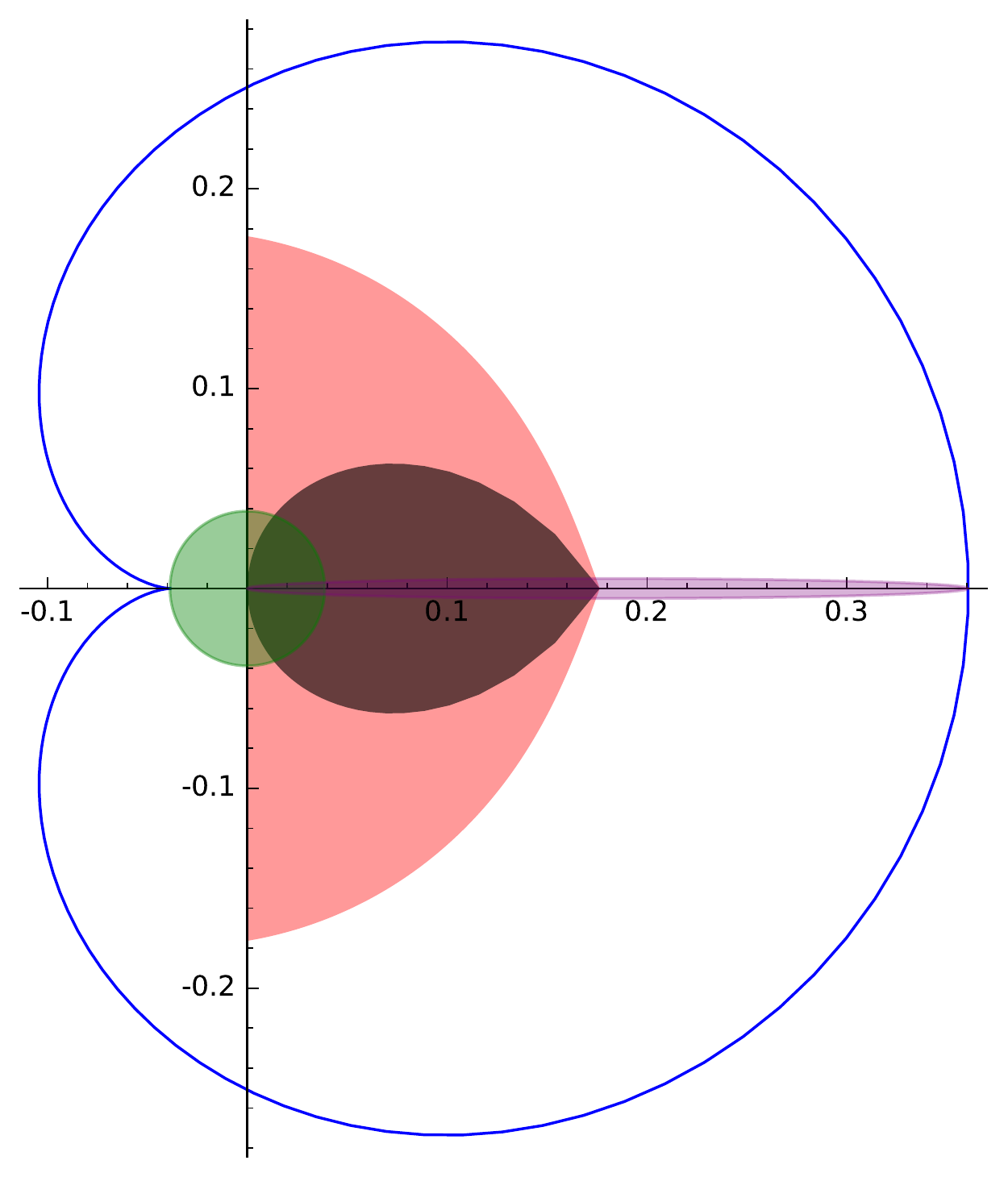}
   \caption{Zero-free regions for the independence polynomial of graphs of maximal degree at most $\Delta=10$. The green domain is Shearer's disk, the brown berry-shape domain is $D_{PR}$ and the red one is our new domain. The blue curve is the boundary of $U_9$.}
   \label{fig:reg}
\end{figure}

The zeros of the independence polynomial are strongly connected to the computational complexity of approximating the independence polynomial. I. Bez\'akov\'a, A. Galanis, L. A. Goldberg and \v{S}tefankovi\v{c} \cite{bezakova2018inapproximability} showed that 
outside $U_d$ approximating $Z_G(\lambda)$ on graphs $G$ with maximum degree at most $\Delta$ is NP-hard. In fact, they also showed that if $\lambda$ is not a positive real number outside $U_d$, then approximating the independence polynomial is $\#$P-hard.
For negative $\lambda$ outside $U_d$ they also proved that it is  $\#$P-hard to decide whether $Z_G(\lambda)>0$. Inside $U_d$ the picture is much more pleasing. First, D. Weitz showed that there is
a (deterministic) fully polynomial time approximation algorithm (FPTAS) for computing
$Z_G(\lambda)$ for any $0\leq \lambda < \frac{(\Delta-1)^{\Delta-1}}{(\Delta-2)^{\Delta}}$ for any graph of maximum degree at most $\Delta$. V. Patel and G. Regts \cite{patel2017deterministic} showed that if $Z_G(\lambda)$ does not vanish in a domain then there is a polynomial time algorithm for approximating the partition function. Their method is based on the interpolation method introduced by A. Barvinok \cite{barvinok2016combinatorics}.
\bigskip

\section{Preliminaries}

Our proof of Theorem~\ref{circular} is inspired by the proof of Theorem~\ref{PR2}, but we simplify some arguments and add some extra ideas. One of the simplifications is based on the observation that it is enough to prove Theorem~\ref{circular} for trees as the following lemma shows.

\begin{Lemma}[F. Bencs \cite{bencs2017trees}] \label{tree} For every graph $G$ with maximum degree at most $\Delta$ there exists a tree $T_G$ with maximum degree at most $\Delta$ such that $Z_G(\lambda)$ divides $Z_{T_G}(\lambda)$ as a polynomial.
\end{Lemma}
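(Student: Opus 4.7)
The natural approach is induction on the cycle rank $c(G) := |E(G)| - |V(G)| + k(G)$, where $k(G)$ is the number of connected components. The base case $c(G) = 0$ (so $G$ is a forest) is immediate when $G$ is connected, since we can take $T_G = G$; the disconnected case reduces to the connected one. For the inductive step, $c(G) \ge 1$, I would seek an operation that produces a graph $G'$ with strictly smaller cycle rank, maximum degree still at most $\Delta$, and such that $Z_G(\lambda)$ divides $Z_{G'}(\lambda)$. Iterating the operation and applying transitivity of polynomial divisibility then yields a tree $T_G$ with $Z_G \mid Z_{T_G}$.

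The simplest candidate reduction is a ``leaf-split'' at an edge $e = uv$ on some cycle: delete $uv$ from $G$ and attach a new leaf $u_1$ at $v$. This strictly lowers the cycle rank and cannot increase the maximum degree. Combining the leaf recursion
$$Z_{G'}(\lambda) = Z_{G - uv}(\lambda) + \lambda Z_{G - v}(\lambda),$$
the edge recursion $Z_{G - uv}(\lambda) = Z_G(\lambda) + \lambda^2 Z_{G - N[u] \cup N[v]}(\lambda)$, and the vertex recursion $Z_G(\lambda) = Z_{G - v}(\lambda) + \lambda Z_{G - N[v]}(\lambda)$, one can try to express the ratio $Z_{G'}(\lambda)/Z_G(\lambda)$ as a polynomial. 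In the toy case $G = C_3$ this works directly: the surgery produces $P_4$ and one checks $Z_{P_4} = (1 + \lambda) Z_{C_3}$.

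The main obstacle is that this simple leaf-split is already insufficient for $G = C_4$: the surgery yields $P_5$, but a direct computation shows $Z_{C_4} \nmid Z_{P_5}$. In fact the correct witness tree for $C_n$ is $P_{2n-2}$, reflecting the idea that the cycle must be ``unfolded once'' around itself rather than merely cut at one edge. Consequently the right reduction step has to be more global than a single vertex split. My plan would be to follow an analogue of Godsil's path-tree construction for the matching polynomial: consider the tree built from self-avoiding walks from a chosen vertex of $G$, with carefully chosen modifications at back-edges, and deduce $Z_G \mid Z_{G'}$ from a Christoffel--Darboux type identity in the spirit of the matching-polynomial identity $\mu(G,x)/\mu(G-v,x) = \mu(T(G,v),x)/\mu(T(G,v)-v,x)$.

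The hard part is therefore twofold: first, to formulate the right polynomial identity for the independence polynomial that takes the place of Godsil's ratio identity, and second, to check that the tree produced by the construction has maximum degree at most $\Delta$. Once the identity is in place, divisibility follows and the induction closes; the degree bound is the gentler of the two obstacles, since the construction is essentially local and each vertex of the resulting tree has at most as many children as the degree of the corresponding vertex of $G$.
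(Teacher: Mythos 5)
The paper does not actually prove Lemma~\ref{tree}; it is imported verbatim from \cite{bencs2017trees}, and the surrounding text attributes the underlying idea to Weitz's self-avoiding-walk tree and to Scott--Sokal. So there is no in-paper proof to compare against, and I'll assess your sketch on its own terms.

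You have located the right circle of ideas --- the self-avoiding-walk tree as the independence-polynomial analogue of Godsil's path-tree, combined with a Christoffel--Darboux-type identity --- and your sanity check is correct ($Z_{P_{2n-2}} = Z_{C_n}\,Z_{P_{n-2}}$ does hold, so $P_{2n-2}$ is a witness for $C_n$). But the proposal is not yet a proof. You show yourself that the obvious one-step surgery (delete a cycle edge $uv$ and hang a new leaf at $v$) already fails for $C_4$, and you then explicitly defer the actual construction and the identity that would make the induction close: ``the hard part is therefore twofold\ldots\ Once the identity is in place, divisibility follows.'' That identity \emph{is} the content of the lemma, so the inductive step is an acknowledged gap rather than a completed argument. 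A second, smaller gap: the claim that the disconnected base case ``reduces to the connected one'' is not immediate. For a forest $F=T_1\sqcup T_2$ one needs a single tree $T$ of maximum degree at most $\Delta$ with $Z_{T_1}Z_{T_2}\mid Z_T$, and none of the obvious gluings (adding a leaf-to-leaf edge, inserting a new joining vertex, inserting a long path) produces divisibility for free; e.g.\ joining via a new degree-$2$ vertex $c$ adjacent to leaves $a\in T_1$, $b\in T_2$ gives $Z_T = Z_{T_1}Z_{T_2} + \lambda\,Z_{T_1-a}\,Z_{T_2-b}$, which is not divisible by $Z_{T_1}Z_{T_2}$ in general. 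So both the base case and the inductive step need genuine work: you would have to write down the precise splitting identity (as Bencs does) and verify it preserves the degree bound and connectivity, rather than appeal to the analogy with the matching polynomial.
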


There are many choices for the tree $T_G$ in Lemma~\ref{tree}. This  lemma is implicit in the work of D. Weitz
\cite{weitz2006counting} and in the work of A. D. Scott and A. D. Sokal \cite{scott2005repulsive}, but the above algebraic formulation is due to F. Bencs \cite{bencs2017trees}.
\bigskip

Given a tree $T$ with maximum degree $\Delta\geq 2$ let us pick one of its leaf as a root vertex $v$. Then every vertex has at most $\Delta-1$ children. If $T'$ is a subtree of $T$ then we can choose a root vertex of $T'$ in a natural way: the vertex $v'$ closest to $v$. Following H. Peters and G. Regts \cite{peters2017conjecture} for a fixed $\lambda$ let us introduce the quantity
$$R_{G,v}=\frac{\lambda Z_{G\setminus N[v]}(\lambda)}{Z_{T-v}(\lambda)}.$$
Note that for an arbitrary graph $G$ and vertex $v$ we have 
$$Z_G(\lambda)=Z_{G-v}(\lambda)+\lambda Z_{G-N[v]}(\lambda),$$
where $N[v]=N(v)\cup \{v\}$, the closed neighborhood of $v$.  Hence $Z_G(\lambda)\neq 0$ if and only if $R_{G,v}\neq -1$. Now if $T'$ is a tree with root vertex $v'$, and $u_1,\dots ,u_k$ are the neighbours of $v'$ then a simple computation shows that 
$$R_{T',v'}=\frac{\lambda}{\prod_{i=1}(1+R_{T_i,u_i})},$$
where $T_i$ is the subtree of $T'-v'$ rooted at $u_i$. Here $k\leq \Delta-1=d$. For a fixed $\lambda$ let us consider the map
$$f_{\lambda}(z_1,\dots ,z_d)=\frac{\lambda}{\prod_{i=1}^d(1+z_i)}.$$
It is not a problem that in the above recusrion $k$ may be smaller than $d$ since we can substitute $z_i=0$ for those variables that we do not need.

Now let $S_{\lambda}$ be the set of elements of the complex plane that can be obtained by the rules (1) $0\in S_{\lambda}$, (2) $z_1,\dots ,z_d\in S_{\lambda}$ then $f_{\lambda}(z_1,\dots ,z_d)\in S_{\lambda}$. The following lemma is now trivial.

\begin{Lemma} \label{general}
For a fixed $\lambda$ let us consider the map
$$f(z_1,\dots ,z_d)=\frac{\lambda}{\prod_{i=1}^d(1+z_i)}.$$
If there is an open domain $F_{\lambda}$ such that $0\in F_{\lambda},-1\notin \overline{F_{\lambda}}$ and for $z_1,\dots ,z_d \in F_{\lambda}$ we have $f(z_1,\dots ,z_d)\in F_{\lambda}$ then $Z_G(\lambda)\neq 0$ for any graph $G$ with maximum degree $d+1$.
\end{Lemma}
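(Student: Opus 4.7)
The plan is to combine Lemma~\ref{tree} with a straightforward structural induction on trees, using the invariance of $F_\lambda$ under $f$ to keep the ratios $R_{T',v'}$ inside $F_\lambda$ throughout.

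First I would reduce the problem to trees: by Lemma~\ref{tree}, for every graph $G$ of maximum degree at most $d+1$ there is a tree $T_G$ of the same maximum degree bound such that $Z_G(\lambda)$ divides $Z_{T_G}(\lambda)$. Thus $Z_{T_G}(\lambda)\ne 0$ implies $Z_G(\lambda)\ne 0$, and it suffices to establish the conclusion for trees. Given a tree $T$ of maximum degree at most $d+1$, I would pick a leaf vertex $v$ of $T$ and designate it as the root, so that every other vertex has at most $d$ children in the rooted tree. Via the identity $Z_T(\lambda)=Z_{T-v}(\lambda)+\lambda Z_{T-N[v]}(\lambda)$ already recorded in the preliminaries, the task $Z_T(\lambda)\ne 0$ is equivalent to showing $R_{T,v}\ne -1$.

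The heart of the argument is then an induction on the number of vertices of a rooted subtree $T'$ of $T$, with the inductive claim being $R_{T',v'}\in F_\lambda$. For the base case, when $T'$ is a single vertex, the recursion gives $R_{T',v'}=f(0,\dots,0)=\lambda$, and since $0\in F_\lambda$ and $F_\lambda$ is $f$-invariant, we indeed have $\lambda\in F_\lambda$. For the inductive step, let $u_1,\dots,u_k$ be the children of $v'$ (with $k\le d$), let $T_i$ be the subtree rooted at $u_i$, and apply the recursion
\[
R_{T',v'}=\frac{\lambda}{\prod_{i=1}^{k}(1+R_{T_i,u_i})}=f\bigl(R_{T_1,u_1},\dots,R_{T_k,u_k},0,\dots,0\bigr),
\]
padding with zeros in the unused slots. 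Each $R_{T_i,u_i}$ lies in $F_\lambda$ by the inductive hypothesis, and $0\in F_\lambda$, so $f$-invariance of $F_\lambda$ places $R_{T',v'}$ in $F_\lambda$ as well.

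Applying the inductive claim to the full tree $T$ yields $R_{T,v}\in F_\lambda$. Since by hypothesis $-1\notin \overline{F_\lambda}$, in particular $-1\notin F_\lambda$, so $R_{T,v}\ne -1$, and therefore $Z_T(\lambda)\ne 0$. Combined with the reduction from the first paragraph, this gives $Z_G(\lambda)\ne 0$ for every graph $G$ of maximum degree at most $d+1$. Honestly, there is no real obstacle here; the lemma is essentially a packaging of the tree-recursion for the ratios $R_{T',v'}$ and is called ``trivial'' in the paper for good reason. The hypothesis ``$-1\notin\overline{F_\lambda}$'' (rather than merely ``$-1\notin F_\lambda$'') is not used in this proof and is presumably included to make $F_\lambda$ a usable target domain when one later wants to verify the hypotheses for concrete sets $F_\lambda$ depending continuously on parameters.
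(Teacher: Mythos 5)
Your proof is correct and is exactly the argument the paper has in mind when it calls the lemma ``trivial'' immediately after setting up the tree reduction (Lemma~\ref{tree}) and the recursion for $R_{T',v'}$: reduce to trees, root at a leaf, induct up the tree keeping the ratio in $F_\lambda$, and conclude $R_{T,v}\neq -1$. One small point worth making explicit: the inductive claim ``$R_{T',v'}\in F_\lambda$'' tacitly includes the assertion that $R_{T',v'}$ is well-defined, i.e.\ $Z_{T'-v'}(\lambda)\neq 0$. This does hold in your induction, since $T'-v'$ is the disjoint union of the $T_i$, so $Z_{T'-v'}=\prod_i Z_{T_i}$, and each $Z_{T_i}\neq 0$ follows from the inductive hypothesis $R_{T_i,u_i}\in F_\lambda$ (hence $\neq -1$) together with $Z_{T_i-u_i}\neq 0$; but it is cleaner to carry $Z_{T'-v'}(\lambda)\neq 0$ alongside $R_{T',v'}\in F_\lambda$ as part of the inductive claim. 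Your remark that $-1\notin F_\lambda$ already suffices for this lemma is accurate; note also that the paper is loose about ``open domain,'' since the $F_\lambda$ actually used in Lemma~\ref{angle1} is a closed sector — neither openness nor the closure condition is used in the proof of this lemma.
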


\noindent We collected some results on $S_{\lambda}$.

\begin{Prop}[H. Peters, G. Regts \cite{peters2017conjecture}] If $\lambda \notin U_d$ then $-1\in \overline{S_{\lambda}}$.
\end{Prop}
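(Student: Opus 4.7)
My approach is complex-dynamical and proceeds by contradiction. Set
\[
g_\lambda(z) := f_\lambda(z,z,\ldots,z) = \frac{\lambda}{(1+z)^d},
\]
so that every iterate $g_\lambda^n(0)$ lies in $S_\lambda$. The first step is a dynamical characterisation of $U_d$: at a fixed point $z^*$ of $g_\lambda$ (so $z^*(1+z^*)^d=\lambda$), differentiation gives $g_\lambda'(z^*)=-dz^*/(1+z^*)$, and solving this together with the fixed-point equation while writing the multiplier as $-\alpha$ produces $z^*=-\alpha/(d+\alpha)$ and $\lambda=-\alpha d^d/(d+\alpha)^{d+1}$. This is exactly the parametrisation defining $U_d$, so $\lambda\in U_d$ is equivalent to $g_\lambda$ admitting an attracting or neutral fixed point.

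Now suppose for contradiction that $-1\notin\overline{S_\lambda}$. Then a neighbourhood of $-1$ is disjoint from the orbit $\{g_\lambda^n(0)\}_{n\geq 0}$, and since the orbit can only reach $\infty$ one step after visiting $-1$, a neighbourhood of $\infty$ is disjoint from it as well. By Montel's theorem the family $\{g_\lambda^n\}$ is normal at $0$, so $0$ lies in the Fatou set of $g_\lambda$. Because $0=g_\lambda(\infty)$ is a critical value, Fatou's classification of periodic Fatou components rules out Siegel/Herman-type components containing $0$ (their boundaries would have to accumulate a critical orbit, but the only critical orbits coincide with the orbit of $0$ itself) and forces the orbit of $0$ to converge to an attracting or parabolic periodic cycle of some period $p\geq 1$. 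If $p=1$ this cycle is a neutral or attracting fixed point, yielding $\lambda\in U_d$ immediately via the parametrisation above.

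The main obstacle is the case $p\geq 2$: Fatou theory alone does not exclude a higher-period attracting cycle for $\lambda$ outside $U_d$, so the multivariate structure of $S_\lambda$ must enter. That structure is strictly richer than the diagonal iteration: $S_\lambda$ is closed under $f_\lambda(w_1,\ldots,w_d)$ for every choice of $w_i\in S_\lambda$, and in particular, setting $d-1$ of the arguments to $0$ shows closure under the M\"obius map $M(w):=\lambda/(1+w)$. Given the periodic points $z_0,\ldots,z_{p-1}\in\overline{S_\lambda}$ of the cycle together with their $M$-iterates, I would mix them inside a single evaluation of $f_\lambda$ and use the perturbative freedom this provides to construct $d$-tuples $(w_1^{(n)},\ldots,w_d^{(n)})\in S_\lambda^d$ whose products $\prod_{i=1}^d(1+w_i^{(n)})$ approach $-\lambda$, whereupon $f_\lambda(w_1^{(n)},\ldots,w_d^{(n)})\to -1$, contradicting $-1\notin\overline{S_\lambda}$. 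The technical heart of the proof is verifying that this approximation is genuinely achievable, i.e.\ that the multiplicative semigroup generated in $1+\overline{S_\lambda}$ by the periodic points of the cycle and their $M$-iterates is rich enough to come arbitrarily close to $-\lambda$; this is exactly the step where the combinatorial flexibility of the tree recursion transcends one-variable iteration, and I expect it to be the main difficulty to execute rigorously.
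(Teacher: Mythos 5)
The paper does not actually prove this Proposition; it cites Peters and Regts for it and moves on. So your attempt is a free-standing argument, and it must be judged on its own. As written it has two genuine gaps, one flagged and one not.

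The unflagged gap is the sentence ``By Montel's theorem the family $\{g_\lambda^n\}$ is normal at $0$, so $0$ lies in the Fatou set of $g_\lambda$.'' Montel's theorem gives normality of a family of holomorphic functions on an \emph{open set} on which they omit three fixed values. What you have established is only that the single orbit $\{g_\lambda^n(0)\}_{n\ge 0}$ avoids neighbourhoods of $-1$ and $\infty$; this says nothing about $g_\lambda^n(w)$ for $w\neq 0$ in a neighbourhood of $0$, and $\overline{S_\lambda}$ need not contain any neighbourhood of $0$ (it could be totally disconnected). A repelling periodic point has a bounded orbit that stays away from $-1$ and $\infty$, yet lies in the Julia set; so ``bounded orbit avoiding a few points'' does not imply ``Fatou point.'' This step therefore needs a genuinely different input, not a routine appeal to Montel, and you cannot reach the Fatou classification without it.

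The flagged gap is the case of an attracting or parabolic cycle of period $p\ge 2$. You are right that one-variable Fatou theory cannot exclude this, and that the multivariate structure of $S_\lambda$ (closure under $w\mapsto \lambda/(1+w)$ and under mixed evaluations of $f_\lambda$) is the natural thing to exploit. But the key claim, that the multiplicative semigroup generated in $1+\overline{S_\lambda}$ by the cycle points and their $M$-iterates comes arbitrarily close to $-\lambda$, is left entirely as an expectation. This is exactly where the difficulty lies: nothing in the sketch excludes, for some $\lambda\notin U_d$, an attracting $2$-cycle whose basin absorbs the whole of $S_\lambda$ while staying far from $-1$.

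A small correction that does not affect the logic: with $z^*=-\alpha/(d+\alpha)$ and $\lambda=-\alpha d^d/(d+\alpha)^{d+1}$ one computes $g_\lambda'(z^*)=-dz^*/(1+z^*)=\alpha$, not $-\alpha$. The characterization ``$\lambda\in U_d$ iff $g_\lambda$ has a fixed point of multiplier of modulus $\le 1$'' is nevertheless correct, and this part of your set-up is sound. For comparison, the argument of Peters and Regts works with normality of the family $\lambda\mapsto g_\lambda^n(0)$ in the parameter $\lambda$ (together with the repelling fixed point at $\lambda_0\notin U_d$) rather than normality of $\{g_\lambda^n\}$ in the phase variable $z$; that route avoids the first gap above entirely, and is the one I would recommend pursuing.
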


\begin{Prop} If $-1\notin \overline{S_{\lambda}}$, and $z\in S_{\lambda}$ then $\frac{\lambda}{1+z}\in U_{d-1}$.
\end{Prop}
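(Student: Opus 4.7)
The plan is to reduce to Proposition~2.2 (the preceding Peters--Regts result) applied at one lower arity, by observing that the map $f_\lambda$ with one coordinate frozen at $z$ is exactly the analogous map for $d-1$ variables with parameter $\mu=\lambda/(1+z)$.

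More precisely, fix $z\in S_\lambda$ and set $\mu=\lambda/(1+z)$. Since $-1\notin\overline{S_\lambda}$ and $z\in S_\lambda$, the denominator $1+z$ is nonzero, so $\mu$ is well defined. Let $g_\mu\colon \mathbb{C}^{d-1}\to\mathbb{C}$ be the map $g_\mu(w_1,\dots,w_{d-1})=\mu/\prod_{j=1}^{d-1}(1+w_j)$, and let $S'_\mu$ be the corresponding set (defined exactly as $S_\lambda$, but with $d$ replaced by $d-1$ and $\lambda$ by $\mu$). The key algebraic identity is
\[
g_\mu(w_1,\dots,w_{d-1})=\frac{\lambda}{(1+z)\prod_{j=1}^{d-1}(1+w_j)}=f_\lambda(z,w_1,\dots,w_{d-1}),
\]
which holds for all $w_1,\dots,w_{d-1}\in\mathbb{C}$ (with the usual convention when some denominator vanishes).

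Using this identity, I would show by structural induction on the generation rules defining $S'_\mu$ that $S'_\mu\subseteq S_\lambda$: the base case $0\in S'_\mu$ is immediate since $0\in S_\lambda$, and for the inductive step, if $w_1,\dots,w_{d-1}\in S'_\mu\subseteq S_\lambda$, then by the identity together with $z\in S_\lambda$ we have $g_\mu(w_1,\dots,w_{d-1})=f_\lambda(z,w_1,\dots,w_{d-1})\in S_\lambda$. Taking closures gives $\overline{S'_\mu}\subseteq \overline{S_\lambda}$, hence $-1\notin\overline{S'_\mu}$.

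Finally, I apply the contrapositive of Proposition~2.2 to the arity $d-1$ (i.e., to the set $S'_\mu$ associated with the map $g_\mu$): since $-1\notin\overline{S'_\mu}$, we conclude $\mu\in U_{d-1}$, which is exactly what was to be shown. There is no real obstacle here; the whole argument is a one-line reduction once the identity $g_\mu(w_1,\dots,w_{d-1})=f_\lambda(z,w_1,\dots,w_{d-1})$ is noticed, and the only small point to be careful about is that $S'_\mu$ really is the set used in Proposition~2.2 for parameter $\mu$ and arity $d-1$, which is immediate from the definitions.
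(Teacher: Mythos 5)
Your proposal is correct and is exactly the argument the paper has in mind: the paper's one-line proof says to consider the identity $f_{\lambda}(z_1,\dots,z_{d-1},z)=f_{\lambda/(1+z)}(z_1,\dots,z_{d-1})$ and apply the contrapositive of the preceding Peters--Regts proposition at arity $d-1$. You have simply spelled out the implicit inclusion $S'_{\mu}\subseteq S_{\lambda}$ by structural induction, which is precisely what ``consider the map\dots and apply the previous proposition'' is abbreviating.
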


\begin{proof} For $z\in S_{\lambda}$ simply consider the map $f_{\lambda}(z_1,\dots ,z_{d-1},z)=f_{\lambda/(1+z)}(z_1,\dots ,z_{d-1})$ and apply the previous proposition.
\end{proof}

\begin{Rem} Suppose that in a small neighborhood of $\lambda$, the polynomials of $Z_G$ do not vanish for each graph $G$ of largest degree at most $\Delta$. Equivalently, $-1\notin \overline{S_{\lambda}}$. Suppose that $|\lambda|\geq \frac{d^d}{(d+1)^{d+1}}$, that is, $\lambda$ is outside of Shearer's disk.
By the above proposition we get that if $z\in S_{\lambda}$, then $\frac{\lambda}{1+z}\in U_{d-1}$. Since $U_{d-1}$ is contained in a disk of radius $\frac{(d-1)^{d-1}}{(d-2)^d}$ we get that
$$\bigg|\frac{\lambda}{1+z}\bigg|\leq \frac{(d-1)^{d-1}}{(d-2)^d}$$
whence
$$|1+z|\geq |\lambda|\frac{(d-2)^d}{(d-1)^{d-1}}\geq \frac{(d-2)^dd^d}{(d-1)^{d-1}(d+1)^{d+1}}.$$
So there is an explicit ball around $-1$ that is avoided by $S_{\lambda}$.

\end{Rem}

\begin{Rem} The advantage of Lemma~\ref{tree} is that we can avoid the two-round argument that is typical in the field. In this two-round argument first one proves an auxiliary claim for pairs $(G,v)$, where all neighbors of $v$ has degree at most $\Delta-1$, and in the second round one can prove the claim in the general case. For a prototype of such an argument see the proof of Theorem 2.3 of  \cite{peters2017conjecture}.
\end{Rem}

\section{Proof of Theorem~\ref{circular}}

\noindent In this section we prove Theorem~\ref{circular}.
Recall that $d=\Delta-1$. First we specify Lemma~\ref{general} to a special domain $F_{\lambda}$.

\begin{Lemma} \label{angle1}
Let $\lambda$ be fixed, and $\arg \lambda=\alpha \in (-\pi/2,\pi/2)$ . Let $0\leq \beta,\gamma \leq \pi/2$. Set
$$\beta'=\arctan \frac{|\lambda|\sin \beta}{1+|\lambda|\cos \beta}\ \ \ \mbox{and}\ \ \ \gamma'=\arctan \frac{|\lambda|\sin \gamma}{1+|\lambda|\cos \gamma}.$$
Suppose that $d\gamma'-\beta \leq \alpha \leq \gamma-d\beta'$. Then $Z_G(\lambda)\neq 0$ for any graph $G$ with maximum degree $d+1$.
\end{Lemma}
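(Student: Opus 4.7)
The plan is to apply Lemma~\ref{general} with an invariant domain shaped as a circular sector. Concretely, set
\[
K=\{0\}\cup\{z\in\C : 0<|z|\leq|\lambda|,\ -\beta\leq\arg z\leq\gamma\}.
\]
Since $\beta,\gamma\leq\pi/2$, the sector $K$ lies in the closed right half-plane and is bounded away from $-1$, and clearly $0\in K$. I will verify $f_\lambda(K^d)\subseteq K$ and then pass to a slightly enlarged open $F_\lambda$ to invoke Lemma~\ref{general}.

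The crucial estimate concerns $\arg(1+z)$. Writing $z=re^{i\theta}$, we have $\arg(1+z)=\arctan\bigl(r\sin\theta/(1+r\cos\theta)\bigr)$, and a routine differentiation shows this expression is monotone nondecreasing in $r$ on $[0,\infty)$ for each fixed $\theta\in(0,\pi/2]$, and in $\theta$ on $[0,\pi/2]$ for each fixed $r>0$. Consequently $\arg(1+z)\in[-\beta',\gamma']$ for every $z\in K$, with the extremes attained at the corners $z=|\lambda|e^{i\gamma}$ and $z=|\lambda|e^{-i\beta}$. Summing over $z_1,\dots,z_d\in K$ yields
\[
\arg f_\lambda(z_1,\dots,z_d)=\alpha-\sum_{i=1}^d\arg(1+z_i)\in[\alpha-d\gamma',\ \alpha+d\beta']\subseteq[-\beta,\gamma]
\]
by the hypothesis $d\gamma'-\beta\leq\alpha\leq\gamma-d\beta'$. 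For the modulus, $\mathrm{Re}(z_i)\geq 0$ forces $|1+z_i|\geq 1$, hence $|f_\lambda|\leq|\lambda|$; together with $f_\lambda\neq 0$, this gives $f_\lambda(K^d)\subseteq K$.

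The main obstacle is that Lemma~\ref{general} requires an \emph{open} invariant $F_\lambda$, whereas $K$ is closed with $0$ on its boundary. One can take $F_\lambda$ to be a small open thickening of $K$---inflating the angular rays, the arc $|z|=|\lambda|$, and the vertex $0$ by a small $\varepsilon$---and verify, using the same monotonicity estimates, that invariance persists. When the hypothesis inequalities are strict, the slack absorbs the $\varepsilon$-perturbations; the boundary (equality) case is handled by a continuity argument, perturbing $(\beta,\gamma)$ slightly into the interior of the parameter region while keeping $\lambda$ inside the corresponding enlarged sector. The analytic content of the proof is entirely in the argument/modulus calculation above; the openness bookkeeping is technical but routine.
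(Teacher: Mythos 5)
Your proof takes essentially the same route as the paper's: both use the closed circular sector $F_\lambda=\{z : |z|\leq|\lambda|,\ -\beta\leq\arg z\leq\gamma\}$ as the forward-invariant region, with the same modulus bound $|1+z_i|\geq 1$ and the same argument estimate $\arg(1+z_i)\in[-\beta',\gamma']$, and then conclude via Lemma~\ref{general}. The one divergence is your $\varepsilon$-thickening to satisfy the ``open domain'' wording of Lemma~\ref{general}; the paper applies the lemma directly to the closed sector and does not address this point, so your extra care is harmless but not a different method --- the openness hypothesis plays no real role in the proof of Lemma~\ref{general} (one only needs $0\in F_\lambda$, invariance, and $-1\notin\overline{F_\lambda}$), and the sector already satisfies these.
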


\begin{proof}
We will show that the domain
$$F_{\lambda}=\{ z\ |\ |z|\leq |\lambda|, -\beta\leq \arg z\leq \gamma\}$$
satisfies the requirements above. Then Lemma~\ref{general} implies the claim. It is clear that $0\in F_{\lambda}$ and $-1\notin \overline{F_{\lambda}}$. Now suppose that $z_1,\dots ,z_d \in F_{\lambda}$. Then
$$|f(z_1,\dots ,z_d)|=\frac{|\lambda|}{\prod_{i=1}^d|1+z_i|}\leq |\lambda|$$
since $|1+z_i|\geq 1$ if $z_i\in F_{\lambda}$. Furthermore,
$$\arg f(z_1,\dots ,z_d)=\arg \lambda-\sum_{i=1}^d\arg (1+z_i).$$
Note that for $z_i\in F_{\lambda}$ we have $-\beta'\leq \arg(1+z_i)\leq \gamma'$ since we have $\{1+z\ |\ z\in F_{\lambda}\}\subseteq \{z'\ |\ -\beta'\leq \arg z'\leq \gamma'\}$. 
Hence
$$-\beta\leq \alpha-d\gamma'\leq \arg f(z_1,\dots ,z_d)\leq \alpha+d\beta'\leq \gamma$$
implying that $f(z_1,\dots ,z_d)\in F_{\lambda}$.
\end{proof}

Now we reverse our argument. We first fix $\beta$ and $\gamma$ and we choose the largest possible $|\lambda|$ and the corresponding $\alpha$. The only condition is that
$d\gamma'-\beta \leq \gamma-d\beta'$, that is, $d(\beta'+\gamma')\leq \beta+\gamma$. As we increase $|\lambda|$ the angles $\beta',\gamma'$ will increase too. So for the largest possible $|\lambda|$ we have $d(\beta'+\gamma')= \beta+\gamma$. We need that
$$d\left(\arctan \frac{|\lambda|\sin \beta}{1+|\lambda|\cos \beta}+\arctan \frac{|\lambda|\sin \gamma}{1+|\lambda|\cos \gamma}\right)=\beta+\gamma.$$
Since
$$\arctan x+\arctan y=\arctan \left(\frac{x+y}{1-xy}\right),$$
and $\tan$ is a monotone increasing function on $(0,\pi/2)$ we get that
$$\frac{\frac{|\lambda|\sin \beta}{1+|\lambda|\cos \beta}+\frac{|\lambda|\sin \gamma}{1+|\lambda|\cos \gamma}}{1-\frac{|\lambda|\sin \beta}{1+|\lambda|\cos \beta}\cdot \frac{|\lambda|\sin \gamma}{1+|\lambda|\cos \gamma}}=\tan \left(\frac{\beta+\gamma}{d}\right).$$
This simplifies to
$$\frac{|\lambda|(\sin \beta+\sin \gamma)+|\lambda|^2\sin(\beta+\gamma)}{1+|\lambda|(\cos \beta+\cos \gamma)+|\lambda|^2\cos(\beta+\gamma)}=\tan \left(\frac{\beta+\gamma}{d}\right).$$
After multiplying with the denominator on the left hand side we get a quadratic equation. Let
$$A=\sin(\beta+\gamma)-\tan\left(\frac{\beta+\gamma}{d}\right)\cos(\beta+\gamma),$$
$$B=\sin \beta+\sin \gamma-\tan\left(\frac{\beta+\gamma}{d}\right)(\cos \beta+\cos \gamma),$$
$$C=-\tan\left(\frac{\beta+\gamma}{d}\right),$$
and $0=A|\lambda|^2+B|\lambda|+C$. Note that
$$\frac{\sin \beta+\sin \gamma}{\cos \beta+\cos \gamma}=\tan \left(\frac{\beta+\gamma}{2}\right).$$
From this it follows that $A>0, B\geq 0, C<0$ since $0\leq \beta+\gamma\leq \pi$, and $d\geq 2$. Hence exactly one of the solutions of the quadratic equation $Ax^2+Bx+C=0$ is positive. Set
$$s(\beta,\gamma)=\frac{-B+\sqrt{B^2-4AC}}{2A}.$$
For $d=2$ it turns out that $s(\beta,\gamma)\equiv 1$. For $d\geq 2$ it is always true that
$$s(\beta,\pi/2)>\frac{2\beta+\pi}{\pi(1+\sin \beta)}\tan \left(\frac{\pi}{2d}\right)>\frac{7}{8}\tan \left(\frac{\pi}{2d}\right).$$
(Some comment about these facts. The function $g(d)=\frac{s_d(\beta,\pi/2)}{\tan \left(\frac{\pi}{2d}\right)}$ is monotone decreasing for fixed $\beta$, and its limit is $\frac{2\beta+\pi}{\pi(1+\sin \beta)}$. The justifications of these facts are standard but somewhat tedious computations.)
It turns out that $s(\beta,\pi/2)$ is a convex function on $[0,\pi/2]$ and at the end points, $s(0,\pi/2)=s(\pi/2,\pi/2)=\tan\left(\frac{\pi}{2d}\right)$.
\medskip

Now let us turn our attention to the angle $\alpha=t(\beta,\gamma)=\gamma-d\beta'$, where the length $|\lambda|$ is fixed to be $s(\beta,\gamma)$. Here we only study the angle $t(\beta,\pi/2)$. Since $s(\beta,\pi/2)$ is continuous we get that $\beta'$ is continuous, and so $t(\beta,\pi/2)$ is continuous too. For $\beta=0$ we get $\beta'=0$ independently of $|\lambda|$, and so $t(0,\pi/2)=\pi/2$. For $\beta=\pi/2$ the equation $d(\beta'+\gamma')=\beta+\gamma$ implies that $t(\pi/2,\pi/2)=0$. Hence $t(\beta,\pi/2)$ takes all values in the interval $[0,\pi/2]$. 

Now we are ready to prove Theorem~\ref{circular}.

\begin{proof}[Proof of Theorem~\ref{circular}]
Consider the domain
$$D=\left\{\lambda \in \C\ |\ \exists \beta \in [0,\pi/2]: |\arg \lambda|=t(\beta,\pi/2), |\lambda|\leq s(\beta,\pi/2)\right\}.$$
Then by the bounds on $s(\beta,\pi/2)$ we have $D_1\subseteq D\subseteq D_2$, and 
$\tan \left(\frac{\pi}{2(\Delta-1)}\right),\pm i\tan \left(\frac{\pi}{2(\Delta-1)}\right)\in D$. Furthermore, for $\Delta=3$ we have $s(\beta,\pi/2)\equiv 1$.

Now suppose that $\lambda\in D$, we can assume that $\arg \lambda=\alpha\geq 0$. Then there exists a $\beta\in [0,\pi/2]$ such that $\arg \lambda=t(\beta,\pi/2)$ and $|\lambda|\leq s(\beta,\pi/2)$. Set $\gamma=\pi/2$ and
$$\beta_1'=\arctan \frac{|\lambda|\sin \beta}{1+|\lambda|\cos \beta}\ \ \ \mbox{and}\ \ \ \gamma_1'=\arctan \frac{|\lambda|\sin \gamma}{1+|\lambda|\cos \gamma},$$
and
$$\beta_2'=\arctan \frac{s(\beta,\pi/2)\sin \beta}{1+s(\beta,\pi/2)\cos \beta}\ \ \ \mbox{and}\ \ \ \gamma_2'=\arctan \frac{s(\beta,\pi/2)\sin \gamma}{1+s(\beta,\pi/2)\cos \gamma}.$$
Since $|\lambda|\leq s(\beta,\pi/2)$ we get that  $\beta_1'\leq \beta_2'$ and $\gamma_1'\leq \gamma_2'$. Furthermore,
$$\alpha=\gamma-d\beta_2'=d\gamma_2'-\beta$$
by the definition of $s(\beta,\gamma)$. Hence
$$d\gamma_1'-\beta\leq \alpha \leq \gamma-d\beta_1'.$$
By Lemma~\ref{angle1} we have $Z_G(\lambda)\neq 0$ for any graph $G$ with maximum degree $\Delta$.

The fact that $D_{PR}$ is contained in $D$ follows from the fact that their argument of zero-freeness is a variation of the special case of Lemma~\ref{angle1} with $\beta=0,\gamma=\pi/2$.

\end{proof}

\section{Analysis}

Here we collected the proofs of some inequalities that we used in the proof of Theorem~\ref{circular}.

\begin{Lemma} Let $x_0$ be the positive zero of the equation $Ax^2+Bx+C=0$, where $A,B>0$ and $C<0$. Then 
$$-\frac{C}{B}-\frac{AC^2}{B^3}\leq x_0\leq -\frac{C}{B}.$$
\end{Lemma}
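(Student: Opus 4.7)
The plan is to reduce both inequalities to a single fixed-point identity. Writing $C'=-C>0$, the equation becomes $Ax^2+Bx-C'=0$, and the defining relation $Ax_0^2+Bx_0=C'$ may be rearranged as
$$x_0=\frac{C'}{B+Ax_0}.$$
Because $A>0$, $B>0$ and $x_0>0$, the denominator is at least $B$, so this identity immediately gives the upper bound $x_0\le C'/B=-C/B$.

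Next I would feed this upper bound back into the same identity to get a matching lower bound. Since $Ax_0\le AC'/B$, one has
$$x_0=\frac{C'}{B+Ax_0}\;\ge\;\frac{C'}{B+AC'/B}\;=\;\frac{C'}{B}\cdot\frac{1}{1+AC'/B^2}.$$
Now I would apply the tangent-line estimate $\frac{1}{1+y}\ge 1-y$, which holds for every $y>-1$ because $\frac{1}{1+y}-(1-y)=\frac{y^2}{1+y}$. Taking $y=AC'/B^2>0$ yields
$$x_0\;\ge\;\frac{C'}{B}\Bigl(1-\frac{AC'}{B^2}\Bigr)\;=\;-\frac{C}{B}-\frac{AC^2}{B^3},$$
as required.

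There is no serious obstacle here: the argument is a two-line bootstrap off the fixed-point form of the quadratic, together with one elementary tangent-line inequality. The only thing worth being careful about is confirming that the positive root really satisfies $Ax_0+B>0$ (which is automatic from $A,B,x_0>0$) so that the identity $x_0=C'/(B+Ax_0)$ is justified, and that $y=AC'/B^2$ lies in the range of validity of $\tfrac{1}{1+y}\ge 1-y$, which is clear since $y>0$.
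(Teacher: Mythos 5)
Your proof is correct, and it shares the paper's overall strategy: establish $x_0\le -C/B$ trivially, then bootstrap that upper bound back into the defining relation to obtain the lower bound. The packaging, however, differs. The paper substitutes $x_0^2\le (C/B)^2$ directly into $0=Ax_0^2+Bx_0+C$, which yields $Bx_0\ge -C-AC^2/B^2$ and hence the lower bound in one line. You instead rewrite the root equation in the fixed-point form $x_0=-C/(B+Ax_0)$, bootstrap $Ax_0\le -AC/B$ there to get the intermediate bound $x_0\ge -CB/(B^2-AC)$, and then weaken this by the tangent-line inequality $\frac{1}{1+y}\ge 1-y$ to reach $-C/B-AC^2/B^3$. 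Your route is one step longer but actually passes through a slightly sharper intermediate estimate ($-CB/(B^2-AC)\ge -C/B-AC^2/B^3$), so it could be read as a mild strengthening of the lemma; the paper's version is the more economical way to land exactly on the stated bound.
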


\begin{proof}
Since $0=Ax_0^2+Bx_0+C\geq Bx_0+C$ we immediately get that $x_0\leq -\frac{C}{B}$. Then
$$0=Ax_0^2+Bx_0+C\geq Bx_0\leq A\left(-\frac{C}{B}\right)^2+Bx_0+C.$$
Hence $x_0\geq -\frac{C}{B}-\frac{AC^2}{B^3}$.
\end{proof}

Recall that
$$A=\sin(\beta+\gamma)-\tan\left(\frac{\beta+\gamma}{d}\right)\cos(\beta+\gamma),$$
$$B=\sin \beta+\sin \gamma-\tan\left(\frac{\beta+\gamma}{d}\right)(\cos \beta+\cos \gamma),$$
$$C=-\tan\left(\frac{\beta+\gamma}{d}\right),$$
where we have chosen $\gamma=\frac{\pi}{2}$. By introducing the notation $U=\tan\left(\frac{\beta+\gamma}{d}\right)$ we have
$$A=\cos(\beta)+U\sin(\beta),\ \ B=1+\sin(\beta)-U\cos(\beta),\ \ C=-U.$$
We first show a computation that motivates the more technical follow-up computation:
$$-\frac{C}{B}=\frac{U}{1+\sin(\beta)-U\cos(\beta)}\geq \frac{U}{1+\sin(\beta)}=\frac{\tan\left(\frac{\beta+\pi/2}{d}\right)}{1+\sin(\beta)}\geq \frac{2\beta+\pi}{\pi(1+\sin(\beta))}\tan \left(\frac{\pi}{2d}\right),$$
where in the last step we used the fact that the function $h(x)=\frac{\tan(x)}{x}$ is a monotone increasing function, hence $h\left(\frac{\beta+\pi/2}{d}\right)\geq h\left(\frac{\pi}{2d}\right)$. Unfortunately, in our case we need a lower bound for $x_0=s(\beta,\pi/2)$ and $-\frac{C}{B}$ is an upper bound to $x_0$. On the other hand, the inequality $-\frac{C}{B}-\frac{AC^2}{B^3}\leq x_0\leq -\frac{C}{B}$ suggest that we need to prove that $x_0\geq \frac{2\beta+\pi}{\pi(1+\sin(\beta))}\tan \left(\frac{\pi}{2d}\right)$. It also motivates the introduction of the notation
$$V=\frac{2\beta+\pi}{\pi}\tan \left(\frac{\pi}{2d}\right).$$


To show that $x_0\geq \frac{2\beta+\pi}{\pi(1+\sin(\beta))}\tan \left(\frac{\pi}{2d}\right)$ it is enough to prove that $Ax^2+Bx+C<0$ for $x=\frac{2\beta+\pi}{\pi(1+\sin(\beta))}\tan \left(\frac{\pi}{2d}\right)$. Thus we need to prove that the function
$$F(d,\beta):=(\cos(\beta)+U\sin(\beta))\left(\frac{V}{1+\sin(\beta)}\right)^2+(1+\sin(\beta)-U\cos(\beta))\left(\frac{V}{1+\sin(\beta)}\right)-U$$
is negative for all $d\geq 3$ and $\beta\in (0,\frac{\beta}{2})$. After algebraic manipulation we get that
$$F(d,\beta)=(V-U)-\frac{\cos(\beta)}{(1+\sin(\beta))^2}V(U-V)-\frac{\sin(\beta)}{(1+\sin(\beta))^2}UV(\cos(\beta)-V).$$
Note that $U\geq V$ since $h(x)=\frac{\tan(x)}{x}$ is a monotone increasing function.  Hence it is enough to show that
$$U-V\geq \frac{\sin(\beta)}{(1+\sin(\beta))^2}UV(V-\cos(\beta)).$$
This is clearly true for fixed $\beta$ and large $d$ since the right hand side is negative. Let us introduce the function
$$G(d,\beta):=U-V-\frac{\sin(\beta)}{(1+\sin(\beta))^2}UV(V-\cos(\beta)).$$
A little computation shows that $G(d,0)=G(d,\pi/2)=0$. It seems to be concave...

\bibliographystyle{siamnodash}

\bibliography{biblio_zero-free}

\end{document}